 \theoremstyle{plain} \newtheorem{theorem}{Theorem}[section]
\newtheorem{lemma}[theorem]{Lemma}
 \theoremstyle{definition}
\newtheorem{definition}[theorem]{Definition} \theoremstyle{remark}
\newtheorem{remark}[theorem]{Remark}
\newcommand{\eltwo}{\ell^2} 
\newcommand{\ca}{\mathcal {A}} 
 \newcommand{\ch}{\mathcal {H}}
\newcommand{\cn}{\mathcal {N}} \newcommand{\ct}{\mathcal {S}}
 \newcommand{\cT}{\mathcal {T}}
\newcommand{\cu}{\mathcal {U}} 
\newcommand{\cc}{{\mathbb C}} 
\newcommand{\rr}{{\mathbb R}} 
\newcommand{\zz}{{\mathbb Z}}
\newcommand{\sg}{\mathrm {sing}}
\newcommand{\hchi}{H_{\chi,a}}
\newcommand{\mt}{\Cal R_X}
\newcommand{\hred}{H_{\mathrm{red}}}
\newcommand{\ol}{\overline} \newcommand{\wt}{\widetilde}
\newcommand{\OL}{\ol{L}}
\newcommand{\wh}{\widehat}
\newcommand{\ga}{\alpha} \newcommand{\gb}{\beta}
\newcommand{\gs}{\sigma} 
\newcommand{\gl}{\lambda}
 \newcommand{\gD}{\Delta}
 \newcommand{\gL}{\Lambda}
\newcommand{\gS}{\Sigma}
 \newcommand{\wu}{\widehat{\cu}}
\newcommand{\Card}{\operatorname{Card}}
\newcommand{\Hom}{\operatorname{Hom}}
\newcommand{\Ker}{\operatorname{Ker}}
\newcommand{\Min}{\operatorname{Min}}
\newcommand{\GR}{\operatorname{Gr}}
\newcommand{\MU}{\Min (U)}
\newcommand{\rk}{\operatorname{rk}}
\newcommand{\tc}{{T^{\mathrm{cpt}}}}
\newcommand{\ctc}{{\cT^{\mathrm{cpt}}_X}}
\newcommand{\gsc}{{\gS^{\mathrm{cpt}}}}
\newcommand{\tgsc}{{\wt{\gS}^{\mathrm{cpt}}}}
\newcommand{\tH}{{H^{\mathrm{cpt}}}}
\newcommand{\Cal}[1]{\mathcal{#1}}
\newcommand{\rx}{\mathcal{R}_X}
\newcommand{\C}[0]{\mathbb{C}}
\newcommand{\R}[0]{\mathbb{R}}
\newenvironment{enumeratei}{\begin{enumerate}[\upshape (i)]}%
		{\end{enumerate}}
\begin{document}

\title{Vanishing results for the cohomology of complex toric hyperplane complements}

\author{M. W. Davis \thanks{The first author was partially supported by
NSF grant DMS 1007068.}  \and
S. Settepanella \thanks{The second author was partially supported by the Institute for New Economic Thinking, INET inaugural grant $\sharp$220.} }
\date{\today} \maketitle

\begin{abstract}

Suppose $\Cal R$ is the complement of an essential arrangement of toric hyperlanes in the complex torus $(\C^*)^n$ and $\pi=\pi_1(\Cal R)$.  We show that $H^*(\Cal R;A)$ vanishes except in the top degree $n$ when $A$ is one of the following systems of local coefficients: (a) a system of nonresonant coefficients in a complex line bundle, (b) the von Neumann algebra $\cn\pi$, or (c) the group ring $\zz \pi$.  In case (a) the dimension of $H^n$ is $|e(\Cal R)|$ where $e(\Cal R)$ denotes the Euler characteristic, and in case (b) the $n^{\mathrm{th}}$ $\eltwo$ Betti number is also $|e(\Cal R)|$.
\smallskip

\noindent
\textbf{AMS classification numbers}.  Primary: 52B30
Secondary: 32S22, 52C35, 57N65, 58J22.  
\smallskip

\noindent
\textbf{Keywords}: hyperplane arrangements, toric arrangements, local systems,
$L^2$-cohomology.
\end{abstract}

\section{Introduction}\label{intro}

A \emph{complex toric arrangement} is a family of complex subtori of a complex torus $(\cc^*)^n$. The study of such objects is a relatively recent topic.
Different versions of these arrangements, also known as toral arrangements, have been introduced and studied in works of Lehrer \cite{lehrer95toral,L2}, Dimca-Lehrer \cite{DimLer}, Douglass \cite{Doug}, Looijenga \cite{Lo} and Macmeikan \cite{Mac1,Mac2}. 

The foundation of the topic can be traced to the paper 
\cite{de2005geometry} by De Concini and Procesi. There the main objects are defined and the cohomology of the complement of a toric arrangement is studied. An explicit goal of \cite{de2005geometry} is to generalize the theory of hyperplane arrangements. (For an extensive account of the work of De Concini and Procesi see \cite{deconcini2010topics}.)

The next step is the work of Moci, in particular
his papers \cite{mocicombinatorics, Moci1,mociwonderful2009}, developing the theory with a
special focus on combinatorics. In  \cite{SimoLuca} Moci and the second author study the homotopy
type of the complement of a special class of toric arrangements which they call \textit{thick}. In \cite{Dantdel} D'Antonio and Delucchi generalize results in \cite{SimoLuca} to a wider class of toric arrangements which they call \textit{complexified} because of structural affinity with the case of hyperplane arrangements. 
They also prove that complements of complexified toric arrangements are minimal (see \cite{Dantdel2}).

In this paper we generalize to toric arrangements  a well known result for affine arrangements:   vanishing conditions for the cohomology of the complement $M(\Cal A)$ of an arrangement $\Cal A$ with coefficients in a complex local system $A$.
Necessary conditions for $H^k(M(\Cal A); A)=0$ if $k \neq n$, i.e., for the cohomology to be concentrated in top dimension, have been  determined by a number of authors, including Kohno \cite{Ko}, Esnault, Schechtman and Viehweg \cite{ESV}, Davis, Januszkiewicz and Leary \cite{djl}, Schechtman, Terao and Varchenko \cite{STV} and Cohen and Orlik  \cite{CoOr}. In particular, in \cite{STV} (see also \cite{CoOr}) it is proved that the cohomology of the complement $M(\Cal A)$ of an arrangement with coefficients in a complex local system  is concentrated in top dimension provided  certain \textit{nonresonance} conditions for monodromies are fulfilled for a certain subset of  edges $($i.e., intersections of hyperplanes$)$ that are called \textit{denses}.

In order to generalize the above results we use techniques developed by the first author in a joint work with Januszkiewicz, Leary and Okun, \cite{djl, djlo, dl, do}. One considers an open
cover of the complement $M$ by ``small'' open sets each homeomorphic to
the complement of a central arrangement.  In the cases of nonresonant rank one local coefficients or $\eltwo$ coefficients, the $E_1$ page of the resulting Mayer-Vietoris spectral sequence is nonzero only along the bottom row, where it can be identified with the simplicial cochains with constant
coefficients on a pair $(N(\cu),N(\cu_\sg))$, which is homotopy equivalent
to $(\cc^n,\gS)$ where $\gS$ is the union of all hyperplanes in the arrangement. 
(The simplicial complex $N(\cu)$ is the nerve of an open cover of $\cc^n$ and $N(\cu_\sg)$ is a subcomplex.)

It follows that the $E_2$ page can be nonzero only in position $(l,0)$. One also can prove that for an affine hyperplane arrangement of rank $l$ only the $l^{\mathrm{th}}$ $\eltwo$-Betti number of the complement $M$ can be nonzero and that it is equal to the rank of the reduced $(l-1)$-homology of $\Sigma$ (cf.~\cite{djl}). Similarly, with coefficients in the group ring, $\zz \pi$, for $\pi=\pi_1(M)$, $H^*(M;\zz\pi)$ is nonzero only in degree $l$ (cf.~\cite{djlo}).
We generalize all three of these vanishing results to the toric case in Theorems \ref{t:generic}, \ref{t:eltwo} and \ref{t:gprg}.

In recent work \cite{PapaSu}, Papadima and Suciu generalize  the result in \cite{CoOr} to arbitrary minimal CW-complex, i.e., a  complex having as many $k$-cells as the $k$-th Betti number. 
It would be very interesting to decide if the complement of toric arrangement also could be minimal. In this case Theorem \ref{t:generic} would be a consequence of minimality. 

Our paper begins with a review of some background about toric and affine arrangements. Then, in Section~ \ref{s:covers}, we give a brief account of open covers by ``small'' convex sets.  
In Section \ref{ss:local} we recall basic definitions on systems of local coefficients. Finally in Section \ref{s:mv} we prove that the cohomology of the complement of a toric arrangement with coefficient in a local system $A$ vanishes except in the top degree when $A$  is a nonresonant local system, the von Neumann algebra $\cn\pi$ or the group ring $\zz \pi$. 

\section{Affine and toric hyperplane arrangements}\label{s:hyper}
\paragraph{Affine hyperplanes arrangements}
A \emph{hyperplane arrangement} $\ca$ is a finite collection of affine
hyperplanes in $\cc^n$.  A \emph{subspace} of $\ca$ is a nonempty
intersection of hyperplanes in $\ca$.  Denote by $L(\ca)$ the poset of
subspaces, partially ordered by inclusion, and let $\ol{L}(\ca):=L(\ca)\cup \{\cc^n\}$.  An arrangement is
\emph{central} if $L(\ca)$ has 
a minimum element.  Given $G\in
L(\ca)$, its \emph{rank}, $\rk(G)$, is the codimension of $G$ in
$\cc^n$.  The minimal elements of $L(\ca)$ form a family of parallel
subspaces and they all have the same rank.  The \emph{rank} of an
arrangement $\ca$ is the rank of a minimal element in $L(\ca)$.  $\ca$
is \emph{essential} if $\rk(\ca)=n$.

The \emph{singular set} $\gS(\ca)$ of $\Cal A$ is the union of
hyperplanes in $\ca$.
The complement of $\gS(\ca)$ in $\C^n$ is denoted $M(\ca)$. 

\paragraph{Toric arrangements}
Let $T=(\C^*)^n$ be a complex torus and let $\gL= Hom(T,\C^*) $ denote the group of characters of $T$. Then $\gL\cong \zz^n$.  A character is \emph{primitive} if it is a primitive vector in $\gL$.  Given a primitive character $\chi$ and an element $a\in \C^*$ put 
\[
\hchi=\{t\in T\mid \chi(t)=a\}.
\]
The subtorus $\hchi$ is a \emph{toric hyperplane}. A finite subset $X \subset \gL \times \C^*$ defines a \emph{toric arrangement},
\[
\cT_X:=\{\hchi\}_{(\chi, a) \in X}
\]
The projection of $X$ onto the first factor is denoted $p(X)$ and is called the \emph{character set} of $\cT_X$.  (Thus, $p(X):=\{\chi\mid (\chi,a)\in X\}$.)  The \emph{singular set}, $\Sigma_X$, is the union of toric hyperplanes in the 
arrangement.  Its complement, $T-\Sigma_X$, is denoted $\rx$. The \textit{intersection poset} $L_X$ is the set of nonempty intersections  
of toric hyperplanes and $\ol{L}_X=L_X \cup \{T\}$. $\ol{L}_X$ is partially ordered by inclusion. The \emph{rank} of the arrangement is the dimension of the linear subspace of $\gL\otimes_\zz \rr$ spanned by $p(X)$.  The arrangement is \emph{essential} if its rank is $n$.

Suppose $G\in L_X$.  Choose a point $x\in G$.  The \emph{tangential arrangement along $G$} is the arrangement $\ca_G$ of linear hyperplanes which are tangent to the complex toric hyperplanes containing $G$ (i.e., all hyperplanes of the form $T_x(H_{\chi,a})$ where $T_x(G)\subset T_x(H_{\chi,a})$).  It is a central hyperplane arrangement of rank equal to $n-\dim G$.

Given a toric arrangement $\cT_X$ of rank $l$, let $K_X$ denote the identity component of the intersection of all kernels in $p(X)$, i.e., $K_X$ is the identity component of 
	\[
	 \bigcap_{\chi \in p(X)} \Ker \chi = \{t\in T\mid \chi(t)=1, \ \forall \chi \in p(X)\}.
	\]
Put $\ol{T}_X:= T/K_X$.  Thus, $K_X$ and $\ol{T}_X$ are tori of dimensions $n-l$ and $l$, respectively.  ($K_X\cong (\C^*)^{n-l}$ and $\ol{T}_X\cong (\C^*)^l$.)  Let $\ol{\Sigma}_X$ denote the image of $\Sigma_X$ in $\ol{T}_X$.  Since $T\to T/K_X$ is a trivial $K_X$-bundle, we have a homeomorphism of pairs,
	\begin{equation}\label{e:tsigma}
	(T,\Sigma_X) \cong K_X \times (\ol{T}_X, \ol{\Sigma}_X).
	\end{equation}
In other words, the arrangement in $T$ is just the product of the arrangement in $\ol{T}_X$ with the torus $K_X$.  We call $\ol{\cT}_X$  the \emph{essentialization} of $\cT_X$. So, it is not restrictive to consider essential toric arrangements.

\begin{lemma}\label{l:conc}
\textup{(cf. \cite[Prop.~2.1]{djl}).}  Suppose $\cT_X$ is an essential toric arrangement on $T$ and $\Sigma= \Sigma_X$.  Then $H_*(T, \Sigma)$ is free abelian and concentrated in degree $n$.
\end{lemma}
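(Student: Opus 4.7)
My plan is to adapt the proof of the affine case \cite[Prop.~2.1]{djl}. I would proceed by induction on the number of toric hyperplanes in $\cT_X$, using the long exact sequence of a triple together with the essentialization decomposition~\eqref{e:tsigma} to handle subarrangements that become non-essential after deletion or restriction. The base case is $n=1$: here $T=\cc^*$ and $\Sigma$ is a nonempty finite set of $k$ points, and the long exact sequence of the pair $(\cc^*,\Sigma)$ immediately gives $H_1(\cc^*,\Sigma)=\zz^k$ with all other $H_*(\cc^*,\Sigma)=0$, confirming free concentration in degree $n=1$.

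For the inductive step, pick any hyperplane $H_0\in\cT_X$, set $\ca'=\cT_X\setminus\{H_0\}$ with singular set $\Sigma'\subset\Sigma$, and consider the long exact sequence of the triple $(T,\Sigma,\Sigma')$:
\[
\cdots\to H_k(\Sigma,\Sigma')\to H_k(T,\Sigma')\to H_k(T,\Sigma)\to H_{k-1}(\Sigma,\Sigma')\to\cdots.
\]
By excision, $H_k(\Sigma,\Sigma')\cong H_k(H_0,\,H_0\cap\Sigma')$. Induction on the number of hyperplanes controls $H_*(T,\Sigma')$; induction on $n$ applied to the restricted arrangement on the subtorus $H_0\cong(\cc^*)^{n-1}$ controls $H_*(H_0,H_0\cap\Sigma')$. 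If $\ca'$ or $H_0\cap\ca'$ is not essential, I would invoke~\eqref{e:tsigma} to split the relevant pair as a product of a torus factor with an essential lower-dimensional arrangement pair, and apply K\"unneth together with the inductive hypothesis. Assembling these pieces via the triple's long exact sequence yields $H_k(T,\Sigma)=0$ for $k\neq n$, together with a short exact sequence of free abelian groups
\[
0\to H_n(T,\Sigma')\to H_n(T,\Sigma)\to H_{n-1}(H_0,H_0\cap\Sigma')\to 0,
\]
from which both the concentration in degree $n$ and the freeness of $H_n(T,\Sigma)$ follow.

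The main obstacle is keeping the induction clean when non-essential subarrangements appear: the K\"unneth formula spreads their relative homology across several degrees, and one must verify that in the triple's long exact sequence these spread contributions mesh so that only degree $n$ survives in $H_*(T,\Sigma)$. A slightly strengthened inductive hypothesis that tracks the full decomposition $H_*(T,\Sigma)\cong H_*(K_X)\otimes H_l(\ol{T}_X,\ol{\Sigma}_X)$ for arrangements of rank $l$ makes this compatibility transparent, since then both factors remain free abelian at each stage and the long exact sequence mixes them in a controlled way.
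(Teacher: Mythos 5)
Your strategy is the same as the paper's: delete a toric hyperplane $H_0$, use the long exact sequence of the triple $(T,\Sigma,\Sigma')$ with the excision $H_*(\Sigma,\Sigma')\cong H_*(H_0,H_0\cap\Sigma')$, induct on the number of hyperplanes (and on dimension for the restriction to $H_0$), and treat separately the cases $\rk(\cT')=n$ and $\rk(\cT')=n-1$, using \eqref{e:tsigma} and K\"unneth in the latter. The problem is precisely in the non-essential case, and your sketch does not yet close it. The short exact sequence you assert, $0\to H_n(T,\Sigma')\to H_n(T,\Sigma)\to H_{n-1}(H_0,H_0\cap\Sigma')\to 0$, is false when $\rk(\cT')=n-1$: there $H_{n-1}(T,\Sigma')\cong H_0(\C^*)\otimes H_{n-1}(H_0,H_0\cap\Sigma')$ is nonzero, the connecting map out of $H_{n-1}(H_0,H_0\cap\Sigma')$ does not vanish, and in fact $H_n(T,\Sigma)\to H_{n-1}(H_0,H_0\cap\Sigma')$ is the zero map, with $H_n(T,\Sigma)\cong H_1(\C^*)\otimes H_{n-1}(H_0,H_0\cap\Sigma')$.

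Moreover, the fix you propose --- a strengthened hypothesis recording that both K\"unneth factors are free abelian --- cannot by itself make the contributions ``mesh'': freeness of the groups does not decide whether a particular map in the exact sequence is an isomorphism or zero. What is actually needed is to identify one specific map: under excision, $H_{n-1}(\Sigma,\Sigma')\to H_{n-1}(T,\Sigma')$ is induced by the inclusion of pairs $(H_0,H_0\cap\Sigma')\hookrightarrow(T,\Sigma')$, and since $H_0$ maps isomorphically onto the essentialization $\ol{T}$ of $\cT'$ (so $(T,\Sigma')\cong \C^*\times(H_0,H_0\cap\Sigma')$ with $H_0$ included as a slice), this inclusion induces an isomorphism onto the K\"unneth summand $H_0(\C^*)\otimes H_{n-1}(H_0,H_0\cap\Sigma')$. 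Exactness then kills $H_{n-1}(T,\Sigma)$ and yields freeness and concentration in degree $n$; this naturality statement, not the freeness bookkeeping, is the missing ingredient (and it is exactly how the paper's Case 2 concludes). With that step inserted, the rest of your argument (base case $n=1$, essential case, double induction) is fine.
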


\begin{proof}
We follow the ``deletion-restriction'' argument in \cite[Prop.~2.1]{djl}) using induction on $\Card(\cT_X)$.  Choose a toric hyperplane $H\in \cT_X$.  Let $\cT'=\cT_X-\{H\}$ and let $\cT''$ be the restriction of $\cT_X$ to $H$, i.e., $\cT''=\{H\cap H'\mid H'\in \cT_X\}$.   Let $\gS'$ and $\gS''$ denote the singular sets of $\cT'$ and $\cT''$, respectively.
Consider the exact sequence of the triple $(T, \Sigma, \Sigma')$,
	\begin{equation}\label{e:seq1}
	\to H_*(T, \Sigma') \to H_*(T,\Sigma) \to H_{*-1}(\Sigma,\Sigma')\to
	\end{equation}
There is an excision, $H_{*-1}(\Sigma,\Sigma')\cong H_{*-1} (H, \Sigma'')$.  The rank of $\cT'$ is either $n$ or $n-1$, while the rank of $\cT''$ is always $n-1$.  The argument breaks into two cases depending on the rank of $\cT'$.
\smallskip

\noindent
\emph{Case 1}: the rank of $\cT'$ is $n$.  By induction, $H_*(T,\gS')$ and $H_*(H,H\cap \gS)$ 
are free abelian and concentrated in degrees $n$ and $n-1$, respectively.  So, \eqref{e:seq1} becomes,
	\[
	0 \to H_n(T, \Sigma') \to H_n(T,\Sigma) \to H_{n-1}(H,H\cap\Sigma')\to 0
	\]
and all other terms are $0$.  Therefore, $H_*(T,\gS)$ is concentrated in degree $n$ and $H_n(T,\gS)$ is free abelian.
\smallskip

\noindent
\emph{Case 2}: the rank of $\cT'$ is $n-1$.  Then the projection $T\to \ol{T}$ takes $H$ isomorphically onto $\ol{T}$ and the arrangement $\cT''$ on $H$ maps isomorphically to the arrangement $\ol{\cT}_X$ on $\ol{T}$.  So, $(H, H\cap \gS')\cong (\ol{T}, \ol{\gS})$. By \eqref{e:tsigma}, $(T,\gS')\cong K_X \times (H, H\cap \gS') \cong \C^*\times (H, H\cap \gS')$.  By the K\"unneth Formula, $H_*(T,\gS') \cong 
H_*(\C^*)\otimes H_*(H,H\cap \gS')$. So, 
	\begin{align*}
	H_{n-1}(T,\gS')& \cong H_0(\C^*) \otimes H_{n-1} (H,H\cap \gS') \text{ \ and}\\
	H_n(T,\gS')& \cong H_1(\C^*) \otimes H_{n-1} (H,H\cap \gS');
	\end{align*}
moreover, the first isomorphism is induced by the inclusion $(H,H\cap \gS')\to (T, \gS')$.  So, \eqref{e:seq1} becomes,
	\begin{align*}
	0& \to H_1(\C^*)\otimes H_{n-1}(H, H\cap \gS') \to H_n(T,\Sigma) \to H_{n-1}(H,H\cap\Sigma') \\
	&\to H_0(\C^*)\otimes H_{n-1}(H, H\cap \gS')
	\end{align*}
where the last map is an isomorphism.  It follows that $H_{n-1}(T,\gS)=0$ and that $H_n(T,\gS)\cong H_1(\C^*)\otimes H_{n-1}(H, H\cap \,\gS')$, which, by inductive hypothesis,  is free abelian.  This proves the lemma.
\end{proof}

\paragraph{Complexified toric arrangements}\label{ss:real}
In \cite{Dantdel} D'Antonio-Delucchi consider the case of ``complexified toric arrangements.''  This means that for each $(\chi,a)\in X$,
the complex number $a$ has modulus $1$ (where $X\subset \gL\times \C^*$ is a set defining a toric arrangement $\cT_X$). Let $\tc=(S^1)^n\subset \C^n$ be the compact torus. Then for each $H\in \cT_X$, $H\cap \tc$ is a compact subtorus of $\tc$.  The set of subtori, $\ctc:= \{H\,\cap\, \ct\mid H\in \cT\}$, is called the associated \emph{compact arrangement}.

Let $\gsc:=\gS_X \cap \tc$.  We note that $(T,\gS_X)$ deformation retracts onto $(\tc,\gsc)$.  Here are a few obervations.
\begin{enumeratei}
\item
The universal cover of $\tc$ is $\R^n$ (actually the subspace $i\R^n\subset \C^n$). Let $\pi:\R^n\to \tc$ be the covering projection.   Then for each $\tH\in \ctc$, each component of $\pi^{-1}(\tH)$ is an affine hyperplane and the collection of these hyperplanes is a periodic affine hyperplane arrangement in $\R^n$.
\item
If $\cT_X$ is essential, then $\gsc$ cuts $\tc$ into a disjoint union of convex polytopes, called \emph{chambers} (see  \cite{SimoLuca}) .  
The inverse images of these polytopes under $\pi$ give a tiling of $\R^n$. 
\item
When $\cT_X$ is essential, it follows from (ii) that for $n\ge 2$, $\gsc$ is connected  and that for $n\ge 3$,  $\pi_1(\gsc)=\pi_1(\tc)$.
\item
It is easy to prove Lemma~\ref{l:conc} in the case of a compact arrangement.  We have an excision $H_*(\tc,\gsc)\cong H_*(\coprod (P_i, \partial P_i))$ where each chamber $P_i$ is an $n$-dimensional convex polytope.  Hence, $H_*(\tc,\gsc)$ is concentrated in degree $n$ and is free abelian.  Moreover, the rank of $H_*(\tc,\gsc)$ is the number of chambers.
\item
Let $\tgsc$ denote the inverse image of $\gsc$ in $\rr^n$ and let $\wt{\gS}_X$ be the induced cover of $\gS_X$.  Suppose $\cT_X$ is essential.  Then $\tgsc$ cuts $\rr^n$ into compact  chambers.  It follows that $\tgsc$ (and hence, $\wt{\gS}$) is homotopy equivalent to a wedge of $(n-1)$-spheres.
\end{enumeratei}

\section{Certain covers and their nerves}\label{s:covers}

Equip the torus $T=(\C^*)^n$ with an invariant metric. This lifts to a Euclidean metric on $\C^n$ induced from an inner product.  Hence, geodesics in $T$ lift to straight lines in $\C^n$ and each component of the inverse image of a subtorus of $T$ is an affine subspace of $\C^n$.  A \emph{convex subset} of $T$ means a geodesically convex subset.  Thus, each component of the inverse image of a convex subset of $T$ is a convex subset of $\C^n$.

The intersection of an open convex subset of $T$ with the toric hyperplanes in $\cT_X$ is equivalent to an affine arrangement.  An open convex subset $U\subset T$ is \emph{small} (with respect to $\cT_X$) if this affine arrangement is central.  In other words, $U$ is \emph{small}
if the following two conditions hold (cf.~\cite{djl,djlo}):
\begin{enumeratei}
\item
$\{G\in \ol{L}(\cT_X) \mid G\cap U\neq\emptyset\}$ has a unique minimum element, $\MU$.
\item
A toric hyperplane $H\in\cT_X$ has nonempty intersection with $U$ if and only if $\MU \subset H$.
\end{enumeratei}
If (i) and (ii) hold, then the arrangement in $U$ is equivalent to the tangential arrangement along $\MU$, which we denote by $\ca_{\MU}$. The intersection of two small convex open sets is also a small convex set; hence, the same is true for any finite intersection of such sets.

Let $\cu=\{U_i\}_{i\in I}$ be an open cover of $T$ by small convex sets,  put
\[
\cu_\sg:= \{U\in \cu\mid U\cap \gS_X\neq \emptyset\}.
\]

\bigskip

Given a nonempty subset $\gs\subset I$, put
$U_\gs:=\bigcap_{i\in \gs} U_i$.  The \emph{nerve} $N(\cu)$ of
$\cu$ is the simplicial complex  defined as follows.  Its
vertex set is $I$ and a finite, nonempty subset $\gs\subset I$ spans a
simplex of $N(\cu)$ if and only if $U_\gs$ is nonempty. We have the following lemma.

\begin{lemma}\label{l:nu}
Suppose $\Cal T_X$ is essential.
$N(\cu)$ is homotopy equivalent to $T$ and $N(\cu_\sg)$ is a
subcomplex homotopy equivalent to $\gS_X$.  Moreover, $H_*(N(\cu),N(\cu_\sg))$ is concentrated in degree $n$ and $H_n(N(\cu),N(\cu_\sg))$ is free abelian.
\end{lemma}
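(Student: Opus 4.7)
The plan is to apply the Nerve Lemma twice, obtaining $N(\cu)\simeq T$ and $N(\cu_\sg)\simeq \gS_X$, and then deduce the relative homology statement from Lemma~\ref{l:conc}.

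For the first equivalence, I would verify that $\cu$ is a good cover of $T$: each $U\in\cu$ is small and convex, and since $U$ lifts to a disjoint union of convex open subsets of $\C^n$ with each component mapping homeomorphically onto $U$ (for a sufficiently fine cover), $U$ is contractible. By the remark at the end of Section~\ref{s:covers}, any nonempty finite intersection of small convex sets is again small convex, and contractible by the same argument. The Nerve Lemma then yields $N(\cu)\simeq T$.

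For $N(\cu_\sg)\simeq \gS_X$, set $W:=\bigcup_{U\in\cu_\sg} U$, an open neighborhood of $\gS_X$. Applying the Nerve Lemma to $\cu_\sg$ viewed as a cover of $W$ (good by the same reasoning) gives $N(\cu_\sg)\simeq W$. It then suffices to show $W$ deformation retracts onto $\gS_X$. For each $U\in\cu_\sg$, the restricted arrangement is the central arrangement $\ca_{\MU}$, and $U$ admits a straight-line deformation retraction onto $U\cap\MU\subseteq U\cap\gS_X$ (working in a convex lift in $\C^n$). Choosing $\cu$ compatibly with the tangential stratification, these local retractions patch into a global deformation retract $W\to \gS_X$, yielding $N(\cu_\sg)\simeq\gS_X$.

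For the final claim, a single partition of unity subordinate to $\cu$ produces a nerve map $T\to N(\cu)$ carrying $\gS_X$ into $N(\cu_\sg)$, i.e., a map of pairs that is a homotopy equivalence on each factor. By the 5-lemma applied to the long exact sequences of $(N(\cu),N(\cu_\sg))$ and $(T,\gS_X)$, we obtain $H_*(N(\cu),N(\cu_\sg))\cong H_*(T,\gS_X)$. Lemma~\ref{l:conc} then gives that this group is free abelian and concentrated in degree $n$, as required. The main obstacle is the patching step in the second paragraph: the local straight-line retractions onto the strata $\MU$ must be compatible on overlaps so as to extend to a global deformation retraction $W\to\gS_X$, and this requires choosing $\cu$ carefully enough that finite intersections of sets in $\cu_\sg$ actually meet $\gS_X$ and the retractions agree.
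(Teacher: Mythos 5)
Your proposal follows essentially the same route as the paper: the Nerve Lemma (Hatcher, Cor.\ 4G.3) applied to the good covers $\cu$ of $T$ and $\cu_\sg$ of a neighborhood of $\gS_X$, together with Lemma~\ref{l:conc} to identify $H_*(N(\cu),N(\cu_\sg))\cong H_*(T,\gS_X)$. The ``patching'' step you flag as the main obstacle is exactly the point the paper also only asserts (that the union of $\cu_\sg$ deformation retracts onto $\gS_X$); note only that the natural local model is a retraction of $U$ onto $U\cap\gS_X$ (which is star-shaped about points of $U\cap\MU$), not onto $U\cap\MU$ itself, since the latter moves points of $\gS_X$ and so cannot patch to a retraction fixing $\gS_X$.
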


\begin{proof}
$\cu_\sg$ is an open cover of a neighborhood of $\gS_X$ which deformation retracts onto 
$\gS_X$.  For each simplex $\gs$ of $N(\cu)$, $U_\gs$ is contractible (in fact, it is a small convex open set).
By a well-known result (see \cite[Cor.\ 4G.3 and Ex.\ 4G(4)]{hatcher}) $N(\cu)$ is homotopy equivalent to $T$ and $N(\cu_\sg)$ is homotopy equivalent to $\gS_X$.  The last sentence of the lemma  follows from Lemma~\ref{l:conc}.
\end{proof}

\begin{definition}\label{d:beta}
$\gb(\Cal T_X)$ is the rank of $H_n(N(\cu), N(\cu_\sg))$.
\end{definition}

Equivalently, $\gb(\Cal T_X)$ is the rank of $H_n(T,\gS_X)$. It is not difficult to see that, for essential arrangements, 
$(-1)^n\gb(\Cal T_X)=e(T,\gS_X)=-e(\gS_X)=e(\Cal R_X)$, where $e(\,)$ denotes Euler characteristic.

\section{Local coefficients}\label{ss:local}

\paragraph{Generic and nonresonant coefficients}
Consider an affine arrangement $\ca$
The fundamental group $\pi$ of its complement,  $M(\ca)$, is generated by loops $a_H$ for $H \in \ca$, where the loop $a_H$ goes once around the hyperplane $H$ in the ``positive'' direction.  Let $\ga_H$ denote the image of $a_H$ in
$H_1(M(\ca))$.  Then $H_1(M(\ca))$ is free abelian with basis
$\{\ga_H\}_{H\in \ca}$.  So, a homomorphism $H_1(M(\ca))\to \cc^*$ is
determined by an $\ca$-tuple $\gL\in (\cc^*)^\ca$, where
$\gL=(\gl_H)_{H\in \ca}$ corresponds to the homomorphism sending
$\ga_H$ to $\gl_H$.  Let $\psi_\gL:\pi\to \cc^*$ be the composition of
this homomorphism with the abelianization map $\pi\to H_1(M(\ca))$.
The resulting rank one local coefficient system on $M(\ca)$ is denoted $A_\gL$.

Returning to the case where $\Cal T_X$ is a toric arrangement, for each
simplex $\gs$ in $N(\wu)$, let $\ca_\gs:=\ca_{\Min(U_\gs)}$ be the corresponding central
arrangement (so that $\wh{U}_\gs \cong M(\ca_{\gs})$). 
Given $\gL_\gs \in (\cc^*)^{\Cal A_\gs}$, put
\[
\gl_\gs:=\prod_{H\in \ca_\gs} \gl_H.
\]
Let $A_{\gL_T}\in \Hom (H_1(\Cal R_X),\cc^*)$ 
be a  local coefficient system on $\Cal R_X$.  
The localization of $A_{\gL_T}$ on the open set $\wh{U}_\gs$  has the form $A_{\gL_\gs}$, where $\gL_\gs$ is a $\Cal A_\gs$-tuple in $\cc^*$. We call $\gL_T$ \emph{generic} if $\gl_\gs \neq 1$ for all $\gs\in N(\cu_\sg)$.
We call $\gL_T$ \emph{nonresonant} if $\gL_\gs$ is nonresonant in the sense of \cite{CoOr} for all $\gs\in N(\cu_\sg)$ i.e.,  if the Betti numbers of $M(\Cal A_\gs)$ with coefficients in $A_{\gL_\gs}$ are minimal.

\paragraph{$\eltwo$-cohomology and coeffiicients in a group von Neumann algebra}
For a discrete group $\pi$, $\eltwo\pi$ denotes the Hilbert space of
complex-valued, square integrable functions on $\pi$.  There are
unitary $\pi$-actions on $\eltwo\pi$ by either left or right
multiplication; hence, $\cc\pi$ acts either from the left or right as
an algebra of operators.  The \emph{associated von Neumann algebra}
$\cn\pi$ is the commutant of $\cc \pi$ (acting from, say, the right on
$\eltwo\pi$).

Given a finite CW complex $Y$ with fundamental group $\pi$, the space
of $\eltwo$-cochains on the universal cover $\wt{Y}$ is equal to
$C^*(Y;\eltwo\pi)$, the cochains with local coefficients in $\eltwo
\pi$.  The image of the coboundary map need not be closed; hence,
$H^*(Y;\eltwo\pi)$ need not be a Hilbert space.  To remedy this, one
defines the \emph{reduced} $\eltwo$-cohomology $\hred^*(Y;\eltwo\pi)$  to be
the quotient of the space of cocycles by the closure of the space of
coboundaries.  
The von Neumann algebra admits a trace.  Using this, one can attach a
``dimension,'' $\dim_{\cn\pi} V$, to any closed, $\pi$-stable subspace
$V$ of a finite direct sum of copies of $\eltwo\pi$ (it is the trace
of orthogonal projection onto $V$).  The nonnegative real number
$\dim_{\cn\pi}(\hred^p(Y;\eltwo\pi))$ is the $p^{\mathrm {th}}$
\emph{$\eltwo$-Betti number} of~$Y$.

A technical advance of L\"uck \cite[Ch.~6]{luckbk} is the use local
coefficients in $\cn\pi$ in place of the previous version of
$\eltwo$-cohomology.   
He shows there is a well-defined
dimension function on $\cn\pi$-modules, $A\to \dim_{\cn\pi} A$, which gives the same answer
for $\eltwo$-Betti numbers, i.e., for each~$p$ 
one has that 
$\dim_{\cn\pi}H^p(Y;\cn\pi)=\dim_{\cn\pi}\hred^p(Y;\eltwo\pi)$.  

\paragraph{Group ring coeffiicients} Let $Y$ be a connected CW complex, $\pi=\pi_1(Y)$ and $r:\wt{Y}\to Y$ the universal
cover. There is a well-defined action of $\pi$ on $\wt{Y}$ and hence, on the cellular chain complex of $\wt{Y}$. Given the left $\pi$-module $\zz\pi$, define the cochain complex with \emph{group ring coefficients}
\[
C^*(Y;\zz\pi):=\Hom_\pi(C_*(\wt{Y}),\zz\pi).
\]
Taking cohomology gives $H^*(Y;\zz\pi)$.  

\section{The Mayer-Vietoris spectral sequence}\label{s:mv}

\paragraph{Statements of the main theorems}
Suppose $\cT_X$ is an essential toric arrangement in $T$ and $\pi=\pi_1(\mt)$. 

\begin{theorem}\label{t:generic} Let $\gL_T$ be a generic $X$-tuple with entries in $k^*$.  Then $H^*(\Cal R_X;A_{\gL_T})$ is concentrated in
degree $n$ and $$\dim_k H^n(\mt;A_{\gL_T})=\gb(\Cal T_X).$$
\end{theorem}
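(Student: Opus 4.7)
The plan is to imitate the Mayer-Vietoris argument of \cite{djl} in the toric setting. I take the induced cover $\wu := \{\wh U_i\}_{i\in I}$ of $\Cal R_X$ where $\wh U_i := U_i \cap \Cal R_X$; because $\gS_X$ has real codimension~$2$ in $T$, every nonempty $U_\gs$ still meets $\Cal R_X$, so the nerve of $\wu$ coincides with $N(\cu)$. The associated Mayer-Vietoris spectral sequence has the form
\[
E_1^{p,q} \;=\; \prod_{\gs \in N(\cu)^{(p)}} H^q\bigl(\wh U_\gs;\, A_{\gL_\gs}\bigr) \;\Longrightarrow\; H^{p+q}(\Cal R_X;\, A_{\gL_T}).
\]
My goal is to show that $E_1^{p,q}=0$ for $q>0$ and that the bottom row $q=0$ is the relative simplicial cochain complex $C^*(N(\cu), N(\cu_\sg);\, k)$; Lemmas~\ref{l:nu} and~\ref{l:conc} will then finish the proof.

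The heart of the argument is the local computation for $\gs \in N(\cu_\sg)$, in which case $\wh U_\gs \simeq M(\ca_\gs)$ is the complement of the central tangential arrangement $\ca_\gs$. I plan to exploit the free scaling $\cc^*$-action on $\wh U_\gs$ (after splitting off a trivial $\cc^{n-d}$ factor to make $\ca_\gs$ essential), which exhibits $\wh U_\gs$ as a principal $\cc^*$-bundle over its projectivization. The class in $H_1(\wh U_\gs)$ of the $\cc^*$-fibre is precisely the sum $\sum_{H\in\ca_\gs} a_H$ of the meridians, so the monodromy of $A_{\gL_\gs}$ along the fibre is $\gl_\gs = \prod_{H\in\ca_\gs} \gl_H$. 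Genericity forces $\gl_\gs \neq 1$, so the restriction of $A_{\gL_\gs}$ to the $\cc^*$-fibre is a non-trivial rank-one local system; since $H^*(\cc^*; L)=0$ for every such $L$, the Leray-Serre spectral sequence of the fibration collapses trivially and $H^*(\wh U_\gs;\, A_{\gL_\gs})=0$ in every degree. For $\gs \notin N(\cu_\sg)$, $\wh U_\gs = U_\gs$ is contractible and disjoint from $\gS_X$, so it contributes $k$ in bidegree $(p,0)$ and nothing else.

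Combining these two cases, $E_1^{p,q}=0$ for $q>0$, and
\[
E_1^{p,0} \;=\; \prod_{\gs \,\in\, N(\cu)^{(p)}\setminus N(\cu_\sg)^{(p)}} k \;=\; C^p\bigl(N(\cu),\, N(\cu_\sg);\, k\bigr),
\]
with $d_1$ the simplicial coboundary. The spectral sequence therefore collapses at $E_2$, yielding
\[
H^*(\Cal R_X;\, A_{\gL_T}) \;\cong\; H^*\bigl(N(\cu),\, N(\cu_\sg);\, k\bigr).
\]
Lemma~\ref{l:nu} identifies the right-hand side with $H^*(T, \gS_X;\, k)$, and Lemma~\ref{l:conc} then says this is concentrated in degree~$n$ and of rank $\gb(\Cal T_X)$, completing the proof.

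The hard part will be to make the local vanishing rigorous: one must check both that the restriction of $A_{\gL_T}$ to $\wh U_\gs$ really is the local system $A_{\gL_\gs}$ built from the meridians of $\ca_\gs$, and that the $\cc^*$-fibre class in $H_1(M(\ca_\gs))$ is indeed $\sum_{H\in\ca_\gs} a_H$ so that the relevant fibre monodromy equals $\gl_\gs$. Modulo those identifications, the collapse of the spectral sequence and the passage to $H^*(T, \gS_X;\, k)$ via the nerve lemma are essentially formal.
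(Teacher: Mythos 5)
Your argument is essentially the paper's: the same Mayer--Vietoris spectral sequence for the cover of $\Cal R_X$ induced by the small convex cover $\cu$, collapse of the $E_1$ page to the bottom row, identification of that row with $C^*(N(\cu),N(\cu_\sg);k)$, and the conclusion via Lemmas~\ref{l:nu} and~\ref{l:conc}. The only real difference is that you prove the local vanishing $H^*(\wh{U}_\gs;A_{\gL_\gs})=0$ directly via the $\cc^*$-fibration of $M(\ca_\gs)$ over its projectivization (fibre class $=\sum_{H\in\ca_\gs}a_H$, monodromy $\gl_\gs\neq 1$), whereas the paper simply quotes this as Lemma~\ref{l:central}(i) from \cite{STV,CoOr,djl}; since that is the standard proof of the cited lemma, the two routes coincide.
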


\begin{theorem}\label{t:eltwo}
\textup{(cf.~\cite{dl})}.
The $\eltwo$-Betti numbers of $\mt$ are $0$ except in degree $n$ and $\eltwo b_n (\mt)=\gb(\cT_X)$.
\end{theorem}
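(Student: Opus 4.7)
The plan is to run the Mayer--Vietoris spectral sequence for the cover $\widehat{\cu}=\{\wh U_i:=U_i\cap\rx\}_{i\in I}$ of $\rx$ with coefficients in the group von Neumann algebra, directly adapting the argument used for affine arrangements in \cite{dl}. Since every small convex open subset of $T$ meets $\rx$, the nerve $N(\widehat{\cu})$ coincides with $N(\cu)$, and the spectral sequence reads
\[
E_1^{p,q}=\prod_{\gs\in N(\cu)^{(p)}} H^q(\wh U_\gs;\cn\pi)\ \Longrightarrow\ H^{p+q}(\rx;\cn\pi).
\]
The aim is to show, after passing to L\"uck dimensions, that $E_1$ is concentrated in the bottom row $q=0$ and, within that row, only the simplices $\gs\notin N(\cu_\sg)$ contribute a nonzero copy of $\cn\pi$.

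The crux is the vanishing of $\cn\pi$-cohomology on small open sets that meet the singular locus. For $\gs\in N(\cu_\sg)$, $\wh U_\gs$ is homotopy equivalent to the complement $M(\ca_\gs)$ of a central hyperplane arrangement. Picking any hyperplane $H\in\ca_\gs$ with linear defining form $L$, the map $x\mapsto(L(x),[x])$ trivializes the scaling $\C^*$-bundle when $\ca_\gs$ is essential; otherwise one first splits off the contractible positive-dimensional center and applies the same trick to the essentialization. Either way, $M(\ca_\gs)\simeq S^1\times Y_\gs$, with the $S^1$-factor generated by the loop $t\mapsto e^{2\pi i t}x_0$ that winds once around each hyperplane of $\ca_\gs$. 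Its image in $\pi=\pi_1(\rx)$ is of infinite order: in $H_1(\rx;\qq)$ it records a nontrivial sum of meridians of $\cT_X$, and meridians of distinct toric hyperplanes are linearly independent as in the affine setting. The standard fact that $\dim_{\cn\pi}H^*(S^1;\cn\pi)=0$ when the monodromy is of infinite order (both $\ker(g-1)$ and $\operatorname{coker}(g-1)$ have vanishing L\"uck dimension), combined with a K\"unneth argument, then yields $\dim_{\cn\pi}H^*(\wh U_\gs;\cn\pi)=0$ in every degree. For $\gs\notin N(\cu_\sg)$, $\wh U_\gs=U_\gs$ is contractible, so $H^0(\wh U_\gs;\cn\pi)=\cn\pi$ and all higher cohomology vanishes.

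Assembling these computations, the $E_1$-page reduces, modulo summands of L\"uck dimension zero, to the relative cellular cochain complex $C^*(N(\cu),N(\cu_\sg))\otimes_{\zz}\cn\pi$ placed in row $q=0$. Its cohomology $H^*(N(\cu),N(\cu_\sg))\otimes\cn\pi$ is concentrated in degree $n$ with $\cn\pi$-rank $\gb(\cT_X)$ by Lemma \ref{l:nu} and Definition \ref{d:beta}. Because higher differentials preserve L\"uck dimension, the spectral sequence collapses to yield $\dim_{\cn\pi}H^n(\rx;\cn\pi)=\gb(\cT_X)$ and $\dim_{\cn\pi}H^p(\rx;\cn\pi)=0$ for $p\ne n$. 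L\"uck's identification of these dimensions with the $\ell^2$-Betti numbers then finishes the proof. The main obstacle is the small-set vanishing step: one must correctly identify the $\C^*$-factor of $\wh U_\gs$ and verify that the resulting loop has infinite order in the global fundamental group $\pi$, which in the toric setting requires a bit more care than in the affine case of \cite{dl}.
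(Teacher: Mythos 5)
Your argument is correct and follows essentially the same route as the paper: the Mayer--Vietoris spectral sequence for the cover of $\rx$ by small convex sets, vanishing (in the sense of L\"uck dimension) of the $\cn\pi$-cohomology of the pieces meeting $\gS_X$, identification of the surviving bottom row with $C^*(N(\cu),N(\cu_\sg))\otimes\cn\pi$, and then Lemma~\ref{l:nu} together with L\"uck's dimension theory. The only real difference is that you prove the local vanishing step directly (the $\C^*$-splitting of a central complement and the infinite order of the scaling loop in $\pi$, via linear independence of the toric meridians in $H_1(\rx;\qq)$), whereas the paper simply invokes Lemma~\ref{l:central}(ii), citing \cite{djl}.
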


\begin{theorem}\label{t:gprg}
\textup{(cf.~\cite{djlo,do})}.
$H^*(\mt;\zz\pi)$ vanishes except in degree $n$ and $H^n(\mt;\zz\pi)$ is free abelian.
\end{theorem}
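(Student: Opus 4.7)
The plan is to follow the strategy of the previous two theorems, using the Mayer--Vietoris cohomology spectral sequence for the open cover $\{\wh{U}_i\}_{i\in I}$ of $\mt$:
\[
E_1^{p,q} = \prod_{\gs\in N(\cu)^{(p)}} H^q(\wh{U}_\gs;\zz\pi) \ \Longrightarrow\ H^{p+q}(\mt;\zz\pi).
\]

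First I would compute the local contributions $H^q(\wh{U}_\gs;\zz\pi)$. When $\gs\notin N(\cu_\sg)$, the set $\wh{U}_\gs=U_\gs$ is convex and contractible, giving $\zz\pi$ in degree~$0$ and zero in higher degrees. When $\gs\in N(\cu_\sg)$, $\wh{U}_\gs$ is homotopy equivalent to $M(\ca_\gs)$, the complement of the central tangential arrangement at $\MU$, which has rank $r_\gs$. The local system $\zz\pi$ restricts via the composition $\pi_1(\wh{U}_\gs)\to\pi$; after writing $\zz\pi$ as a direct sum of copies of $\zz\pi_1(\wh{U}_\gs)$ indexed by cosets, the computation reduces to the central affine case of \cite{djlo, do}, which yields that $H^*(\wh{U}_\gs;\zz\pi)$ is free abelian and concentrated in degree $r_\gs$.

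Next I would reorganize the $E_1$ page. Following \cite{djlo, do}, the double complex whose total cohomology is $E_\infty$ is shown to be quasi-isomorphic to the relative cellular cochain complex $C^*(N(\cu), N(\cu_\sg);\zz\pi)$ of the nerve pair with constant coefficients~$\zz\pi$. By Lemma~\ref{l:nu} this pair is homotopy equivalent to $(T,\gS_X)$, and by Lemma~\ref{l:conc} the relative homology $H_*(T,\gS_X)$ is free abelian concentrated in degree~$n$. A standard universal coefficient argument then yields the same for $H^*(N(\cu), N(\cu_\sg);\zz\pi)$, hence for $H^*(\mt;\zz\pi)$.

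The main obstacle is the reorganization in the second step. In the nonresonant and $\eltwo$ cases the $E_1$ page is nonzero only along a single row, making the identification with the nerve--pair cochain complex essentially automatic. For $\zz\pi$ coefficients the nonzero entries appear in rows $r_\gs$ that depend on the simplex~$\gs$, and collapsing these into a single cochain complex with compatible differentials is the technically delicate point of the argument. This is the content of the corresponding analysis in \cite{djlo, do} for affine arrangements, which must be transposed to the toric setting; passage to the compact arrangement $\ctc$ and use of the convex chamber decomposition highlighted in observations~(i)--(v) of Section~\ref{ss:real} should streamline the combinatorics.
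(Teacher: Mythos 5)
Your first step is essentially right (modulo one unjustified point noted below), but the second step contains a genuine error, and it is precisely the point where the group-ring case differs from the other two. The double complex is \emph{not} quasi-isomorphic to $C^*(N(\cu),N(\cu_\sg);\zz\pi)$. That collapse to the bottom row is available for $A_{\gL_T}$ and $\cn\pi$ only because there the local cohomology $H^*(\wh{U}_\gs;A)$ vanishes in \emph{all} degrees when $\gs\in N(\cu_\sg)$ (Lemma~\ref{l:central}(i),(ii)). For $A=\zz\pi$ the local cohomology does not vanish: by Lemma~\ref{l:central}(iii) it is concentrated in degree $r_\gs=\codim \Min(U_\gs)$, and for $\gs\in N(\cu_\sg)$ it is nonzero there. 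These higher rows of $E_1$ do not cancel and they survive to $E_\infty$. Concretely, already for $n=1$ with one toric point removed, $\mt\simeq S^1\vee S^1$, $\pi=F_2$, and $H^1(\mt;\zz F_2)$ is not the free $\zz\pi$-module $H^1(N(\cu),N(\cu_\sg);\zz\pi)$ of rank $\gb(\cT_X)$: the simplices meeting $\gS_X$ contribute nontrivially through $H^1(\wh{U}_\gs;\zz\pi)$ in row $1$. More generally every $G\in L_X$ of dimension $0$ contributes $H^n(M(\ca_G);\zz\pi)\neq 0$ to $E_2^{0,n}$, a summand invisible to the nerve-pair complex with constant coefficients. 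Nor is such a collapse ``the content of the analysis in \cite{djlo,do}'': there, as here, $H^n$ comes out as an extension of pieces indexed by the whole intersection poset, not as a single relative cohomology group.

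The argument that actually works (and is the paper's) keeps the rows separate: the vanishing of the coefficient maps $H^j(M(\ca_G);\zz\pi)\to H^j(M(\ca_{G'});\zz\pi)$ for $G'<G$ lets one decompose the $E_1$ page as a direct sum over $G\in\OL_X$ of relative cochain complexes $C^i(N(\cu_G),N(\cu^\sg_G);H^j(M(\ca_G);\zz\pi))$ with \emph{constant} coefficients, each sitting in its own row $j=\codim G$. Then Lemma~\ref{l:conc}, applied to the restricted toric arrangement on $G$ via \eqref{e:nerve}, shows each summand of $E_2$ is concentrated in column $i=\dim G=n-j$, so the entire $E_2$ page lies on the line $i+j=n$; the spectral sequence degenerates, giving vanishing outside degree $n$ and a free abelian $H^n$ (an iterated extension of free abelian groups). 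So the vanishing below the top degree comes from this anti-diagonal concentration, not from reducing to the bottom row. One further caveat about your first step: writing $\zz\pi$ as a direct sum of copies of $\zz\pi_1(\wh{U}_\gs)$ presumes that $\pi_1(\wh{U}_\gs)\to\pi$ is injective, which you have not justified; if it is not, $\zz\pi$ is a sum of modules of the form $\zz[\pi_1(\wh{U}_\gs)/K]$ and the reduction to Lemma~\ref{l:central}(iii) needs an extra argument.
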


\begin{remark}
Suppose $W$ is a Euclidean reflection group acting on $\rr^n$ and that $\zz^n \subset W$ is the subgroup of translations.  The quotient $W':= W/\zz^n$ is a finite Coxeter group.  The reflection group $W$ acts on the complexification $\cc^n$ and $W'$ acts on the torus $T=\cc^n/\zz^n$.  The image of the affine reflection arrangement in $\cc^n$ gives a toric arrangement $\cT_X$ in $T$.  The fundamental group of $\mt$ is the Artin group $A$ associated to $W$ and $\mt$ is the Salvetti complex associated to $A$.  The quotient of the compact torus by $W'$ can be identified with the fundamental simplex $\gD$ of $W$ on $\rr^n$.  (If $W$ is irreducible, then $\gD$ is a simplex.)  It follows that $\gb(\cT_x)$ is the order of $W'$ (i.e., the index of $\zz^n$ in $W$).  So, in this case Theorem~\ref{t:eltwo} is a special case of the main result of \cite{dl} and Theorem~\ref{t:gprg} is a special case of a result of \cite[Thm.~4.1]{do}.
\end{remark}

\begin{lemma}\label{l:central}
Suppose $\ca$ is a finite, central arrangement of affine hyperplanes.  Let $\pi'=\pi_1(M(\ca))$.  Then 
\begin{enumeratei}
\item \textup{(cf.~\cite{STV, CoOr, djl})}.
For any generic system of local coefficients $A$, 
$H^*(M(\ca);A)$ vanishes in all degrees.
\item \textup{(cf.~\cite{djl})}.
$H^*(M(\ca);\cn \pi')$ vanishes in all degrees.  Hence, all $\eltwo$-Betti numbers are $0$.
\item \textup{(cf.~\cite{djlo})}.
If the rank of $\ca$ is $l$, then $H^*(M(\ca); \zz\pi')$ vanishes except in the top degree, $l$.
\end{enumeratei}
\end{lemma}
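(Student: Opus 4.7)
The plan is to exploit the $\cc^*$-scaling symmetry of a central arrangement, which gives a product decomposition into a circle factor and a lower-dimensional affine arrangement complement, and then to invoke a K\"unneth theorem appropriate to each coefficient system. Concretely, set $V := \bigcap_{H \in \ca} H$, so $V \cong \cc^{n-l}$ is contractible and $M(\ca) \cong V \times M(\ca^{\mathrm{ess}})$; trivializing the free scaling $\cc^*$-action on $M(\ca^{\mathrm{ess}})$ gives
\[
M(\ca) \simeq \cc^* \times M(\ca'),
\]
where $\ca'$ is the affine arrangement of rank $l-1$ in $\cc^{l-1}$ obtained by deconing $\ca^{\mathrm{ess}}$ at a chosen hyperplane ``at infinity''. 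Then $\pi' = \zz \times \pi_1(M(\ca'))$ splits as a direct product, with the $\zz$ factor generated by the scaling loop $c := \sum_{H\in\ca} \ga_H$, and each of (i)--(iii) will follow from an appropriate K\"unneth argument applied to this product.

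For (i), I would observe that the monodromy of $A_\gL$ around the $\cc^*$ factor is exactly $\gl_{\gs_0} := \prod_{H\in\ca} \gl_H$, where $\gs_0$ is the simplex at the center of $\ca$ (the full index set). By the genericity hypothesis $\gl_{\gs_0} \neq 1$, so $H^*(\cc^*; \cc_{\gl_{\gs_0}}) = 0$, and the K\"unneth theorem for rank-one local coefficients then forces $H^*(M(\ca); A_\gL) = 0$ in every degree, recovering the argument of \cite{STV, CoOr, djl}.

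For (ii), I would apply L\"uck's product formula for $\eltwo$-Betti numbers (\cite[Ch.~6]{luckbk}) to the product $\cc^* \times M(\ca')$. Since $\eltwo b_*(\cc^*) = \eltwo b_*(S^1) = 0$, every $\eltwo$-Betti number of $M(\ca)$ vanishes; in the $\cn\pi'$-formalism this is precisely the statement $\dim_{\cn\pi'} H^*(M(\ca); \cn\pi') = 0$ in every degree, as in \cite{djl}.

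For (iii), I would use the K\"unneth formula for cohomology with group-ring coefficients on the product. Because the cellular cochain complex $C^*(Y_i; \zz\pi_i)$ of a finite CW complex $Y_i$ is a bounded complex of finitely generated free $\zz\pi_i$-modules, the external product yields an isomorphism
\[
H^*(M(\ca); \zz\pi') \cong H^*(\cc^*; \zz\zz) \otimes_\zz H^*(M(\ca'); \zz\pi_1(M(\ca'))).
\]
The first tensor factor is $\zz$ concentrated in degree $1$, and by the main theorem of \cite{djlo} applied to the essential affine arrangement $\ca'$ of rank $l-1$, the second factor is free abelian and concentrated in degree $l-1$. Hence the tensor product is free abelian and concentrated in degree $l$, as claimed. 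The main technical obstacle, shared by all three parts, is justifying the product decomposition rigorously: in particular, checking the triviality of the principal $\cc^*$-bundle $M(\ca^{\mathrm{ess}}) \to \pp M(\ca^{\mathrm{ess}})$ and identifying the deconed arrangement $\ca'$ as the complement of an affine hyperplane arrangement of rank $l-1$ in $\cc^{l-1}$, so that the cited theorem of \cite{djlo} can be applied.
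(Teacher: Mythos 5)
The paper offers no argument for this lemma at all: it is quoted as a compilation of known results from \cite{STV,CoOr,djl,djlo}. Your proposal therefore supplies an actual proof, and the route you choose --- the cone splitting $M(\ca)\cong \cc^{n-l}\times M(\ca^{\mathrm{ess}})$ together with $M(\ca^{\mathrm{ess}})\cong \cc^*\times M(d\ca)$ --- is exactly the mechanism underlying those references, so this is a legitimate reconstruction rather than a circular appeal. The technical point you flag is standard and harmless: any defining linear form $\alpha_{H_0}$ of a hyperplane of $\ca^{\mathrm{ess}}$ restricts to a $\cc^*$-equivariant map $M(\ca^{\mathrm{ess}})\to\cc^*$, which trivializes the bundle, and the quotient is the complement of the deconed arrangement, an essential affine arrangement of rank $l-1$ in $\cc^{l-1}$ (Orlik--Terao). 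Granting this, your part (i) is correct: the monodromy around the $\cc^*$ loop is $\prod_{H\in\ca}\gl_H$, genericity makes this $\neq 1$ (and this is the only consequence of genericity needed), and the K\"unneth formula for rank-one systems over a field kills all degrees. Part (iii) is also correct: the cochain complexes involved are bounded complexes of finitely generated free modules, $H^*(\cc^*;\zz[\zz])\cong\zz$ concentrated in degree $1$ so no Tor terms appear, and the main theorem of \cite{djlo} applied to the rank-$(l-1)$ affine arrangement $d\ca$ finishes it; note, though, that the detour is optional, since a central arrangement of rank $l$ is itself an affine arrangement of rank $l$ and (iii) is verbatim a special case of \cite{djlo}.

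The one place where your argument does not give the statement as literally written is (ii). L\"uck's product formula yields $\dim_{\cn\pi'}H^p(M(\ca);\cn\pi')=0$ for all $p$, i.e.\ vanishing of the $\eltwo$-Betti numbers, but not vanishing of the modules $H^p(M(\ca);\cn\pi')$ themselves; and the literal module statement is in fact too strong: already for the arrangement $\{0\}\subset\cc$ one has $H^1(\cc^*;\cn\zz)\cong \cn\zz/(t-1)\cn\zz\neq 0$, even though its von Neumann dimension is $0$. So what you prove (and what \cite{djl} actually provides, phrased there via reduced $\eltwo$-cohomology) is the dimension-zero version. That version is what Theorem \ref{t:eltwo} needs, but one should then say explicitly that the Mayer--Vietoris spectral sequence argument is to be run in L\"uck's dimension theory, where zero-dimensional modules form a Serre class and may be ignored, rather than by asserting that the coefficient systems $\ch^j(\cn\pi)$ vanish outright; adding that remark would close the only real gap between your write-up and the statement.
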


\paragraph{Proofs using the Mayer-Vietoris spectral sequence}
The proofs of these three theorems closely follow the argument in \cite{dl}, \cite{djl} and particularly, in \cite{djlo}.  For $\pi=\pi_1(\mt)$, let $A$ denote one of the left $\pi$-modules  in Section~\ref{ss:local}.

Let $\cu=\{U_i\}$ be an open cover of $T$ by small convex sets.
We may suppose that $\cu$ is finite and that it is closed under taking intersections.
For each $G\in \ol{L}_X$, put 
\begin{align*}
	\cu_G:\!\!&=\{U\in \cu\mid \MU\le G\},\\
	\cu^\sg_G:\!\!&=\{U\in \cu\mid \MU< G\}=\{U\in
	\cu_G\mid U\cap \gS_X \cap G\neq\emptyset\}.\\
\end{align*}
The open cover $\cu$ restricts to an open cover $\widehat{\cu}=\{U-\gS_X\}_{U\in \cu}$ of $\rx$.
Any element $\widehat{U}=U-\gS_X$ of the cover is homotopy equivalent to the complement of a central arrangement $M(\ca_{\MU})$.

Suppose $N(\cu)$ is the nerve of $\cu$ and $N(\cu_G)$ is the subcomplex defined by $\cu_G$.
Since $N(\cu_G)$ and $N(\cu^\sg_G)$ are nerves of covers of $G$ and $\gS_X\cap G$, respectively, by contractible open subsets, we have that for each $G\in \OL(\ca)$, 
\begin{equation}\label{e:nerve}
	H^*(N(\cu_G),N(\cu^\sg_G))=H^*(G,\gS(\cT_X\cap G)).
\end{equation}
For each $k$-simplex $\gs=\{i_0,\dots, i_k\}$ in $N(\cu)$, let 
\[
	U_\gs:=U_{i_0}\cap \cdots \cap U_{i_k} 
\]
denote the corresponding intersection.

Let $r:\wt{\Cal R}_X \to \rx$ be the universal cover.
The induced open cover $\{r^{-1}(\widehat{U})\}$ of $\wt{\Cal R}_X$ has the same nerve $N(\widehat{\cu})$ ($=N(\cu)$).
We have the Mayer--Vietoris double complex, 
\[
	C_{i,j}:=\bigoplus_{\gs\in N^{(i)}} C_j(r^{-1} (\widehat{U}_\gs)), 
\]
where $N^{(i)}$ denotes the set of $i$-simplices in $N(\cu)$ (cf.~\cite[Ch.~VII]{brown}.) We get a corresponding double cochain complex, 
\begin{equation}\label{e:e0}
	E^{i,j}_0:= \Hom_\pi(C_{i,j},A), 
\end{equation}
where $\pi= \pi_1(\rx)$.
The filtration on the double complex gives a spectral sequence converging to the associated graded module for cohomology: 
\[
	\GR H^m(\rx;A)=E_\infty := \bigoplus_{i+j=m} E^{i,j}_\infty . 
\]

By first using the horizontal differential, there is a spectral sequence with $E_1$ page
\[
E^{i,j}_1=C^i(N(\cu);\ch^j(A) )
\]
where $\ch^j(A)$ is the coefficient system on $N(\cu)$ defined by 
	\[
		\gs\mapsto H^j(\wu_\gs; A), 
	\]
where $\wu_\gs\cong M(\ca_{\Min(U_\gs)})$.  
For $A=A_{\gL_T}$ or $A=\cn\pi$ these coefficients are $0$ for $G\neq T$.  For $A=\zz\pi$, they are $0$ for $j\neq \dim(G)$.  Hence, in all cases, for any coface $\gs'$ of $\gs$, if $G':=\Min(U_{\gs'}) < G$, the coefficient homomorphism $H^j(M(\ca_G);A)\to H^j(M(\ca_{G'});A)$ is the zero map.   
Moreover,  the $E_1$ page of the spectral sequence decomposes as a direct sum (cf.~\cite[Lemma~2.2]{do}).
In fact, for a fixed $j$, by using Lemma~\ref{l:central}, we see that the $E^{i,j}_1$ term decomposes as 
	\begin{equation*}
		E^{i,j}_1=\bigoplus_{G\in \OL_X^{n-j}} C^i(N(\cu_G),N(\cu^\sg_G);H^j(M(\ca_G);A)), 
	\end{equation*}
where we have constant coefficients in each summand. Hence, at $E_2$ we have 
	\begin{align}
		E^{i,j}_2&=\bigoplus_{G\in \OL_X^{n-j}} H^i(N(\cu_G),N(\cu^\sg_G);H^j(M(\ca_G);A))\notag\\
		&=\bigoplus_{G\in \OL_X^{n-j}} H^i(G,\gS_X\cap G;H^j(M(\ca_G);A)), \label{e:big}
	\end{align}
where the second equation follows from \eqref{e:nerve}.   

When $A=A_{\gL_T}$ or $A=\cn\pi$, all summands vanish for $G\neq T$ and $j\neq 0$.  So, we are left with $E^{n,0}_2 =H^n(T, \gS_X; A)$, which is isomorphic to the tensor product free abelian group of rank $\gb(\cT_X)$ with $A$.  It follows that $H^*(\mt;A)$ is concentrated  in degree $n$ and that $\dim_\cc H^n(\mt;A_{\gL_T}) = \gb(\cT_X)= \dim_{\cn\pi}H^n(\mt;\cn\pi)$.  This proves Theorems~\ref{t:generic} and \ref{t:eltwo}.

Consider formula \eqref{e:big} for $A=\zz\pi$.  By Lemma~\ref{l:conc}, $H^i(G,\gS_X\cap G)$ is concentrated in degree $\dim G=n-j$.   Hence, $E^{i,j}_2$ is nonzero (and free abelian) only for $i+j=n$.  It follows that  the spectral sequence degenerates at $E_2$, i.e., $E_2=E_\infty$.  This proves Theorem~\ref{t:gprg}.

\begin{remark}Let us remark that the statement of Theorem \ref{t:generic} holds even if the local system $\gL_T$ is nonresonant or if it verifies the Schechtman, Terao and Varchenko nonresonance conditions in all small open convex sets , i.e. $\gL_\gs$ verifies the nonresonance conditions in \cite{STV} for all $\gs\in N(\cu_\sg)$. Indeed under these conditions Lemma \ref{l:central} holds.
\end{remark}

\obeylines
M. W. Davis, Department of Mathematics, The Ohio State University, 231 W. 18th Ave., Columbus Ohio 43210  {\tt mdavis@math.ohio-state.edu} 
S. Settepanella, Scuola Superiore Sant'Anna, Piazza Martiri 2,  56127 Pisa, Italy {\tt s.settepanella@sssup.it}


\begin{thebibliography}{88}


\bibitem{brown} K. S. Brown, \emph{Cohomology of Groups},
Springer-Verlag, Berlin and New York, 1982.



\bibitem{CoOr} D. C. Cohen and P. Orlik, \emph{Arrangements and local systems}, Math. Res. Lett. \textbf{7} (2000), 299Ð-316. 


\bibitem{Dantdel}G. D'Antonio and E. Delucchi, \emph{A Salvetti complex for toric arrangements and its fundamental group}, International Mathematics Research Notices \textbf{15} (2012),  3535--3566.
 
 \bibitem{Dantdel2}G. D'Antonio and E. Delucchi, \emph{Minimality of toric arrangements}, arXiv:1112.5041.
 
\bibitem{djl} M.W.~Davis, T.~Januszkiewicz and I.J.~Leary, \emph{The $\eltwo$-cohomology of hyperplanes complements}, Groups Geom. Dyn. \textbf{1} (2007), 301--309.

\bibitem{djlo} M.W.~Davis, T.~Januszkiewicz, I.J.~Leary and B. Okun, \emph{Cohomology of hyperplane complements with group ring coefficients}, IMRN (2011), 2110--2116.

\bibitem{dl} M. W.~Davis and I. J.~Leary, \emph{The $\eltwo$-cohomology
of Artin groups}, J. London Math. Soc. (2) \textbf{68} (2003),
493--510.

\bibitem{do} M.W. Davis and B. Okun, \emph{Cohomology computations for Artin groups, Bestvina--Brady groups, and graph products}, Groups Geom. Dyn. \textbf{6} (2012), 485--531.

\bibitem{deconcini2010topics}
C.~De~Concini and C.~Procesi, \emph{Topics in hyperplane arrangements, polytopes and box-splines}, Springer Verlag, 2010.


\bibitem{de2005geometry}
C.~De~Concini and C.~Procesi \emph {On the geometry of toric arrangements},
Transformation Groups \textbf{10} (2005), 387--422.


\bibitem{DimLer} A. Dimca and G.I. Lehrer, \textit{Purity and equivariant weight polynomials}, in: \textit{Algebraic Groups and Lie Groups} Austral. Math. Soc. Lect. Ser., Vol. 9, Cambridge University Press, Cambridge, 1997, 161--181.

\bibitem{Doug} J.M. Douglass, \textit{Toral arrangements and hyperplane arrangements}, Rocky Mountain J. Math. \textbf{28} (1998), 939--956.



\bibitem{ESV} H. Esnault, V. Schechtman and E.Viehweg, \emph{Cohomology of local systems on the complement of hyperplanes}, Invent. math. \textbf{109} (1992), 557--561 (1992); Erratum. \textbf{112} (1993).

\bibitem{hatcher} A. Hatcher, \emph{Algebraic Topology}, Cambridge 
Univ. Press, Cambridge, 2001.  

\bibitem{Ko}T. Kohno, \emph{Homology of a local system on the complement of hyperplanes}, Proc. Japan. Acad. Ser. A \textbf{62} (1986),144--147.

\bibitem{lehrer95toral} G.~I. Lehrer, \emph{A toral configuration space and regular semisimple conjugacy classes}, Math. Proc. Cambridge Philos. Soc. \textbf{118(1)} (1995), 105--113.

\bibitem{L2} G. I. Lehrer, \textit{The cohomology of the regular semisimple variety}, J. Algebra 199 (1998), no. 2, 666-689.

\bibitem{Lo} E. Looijenga, \textit{Cohomology of M3 and M1
3. Mapping class groups and moduli spaces
of Riemann surfaces}, (Gottingen, 1991/Seattle, WA, 1991), Contemp. Math., Vol. 150,
Amer. Math. Soc., 1993, 205--228.

\bibitem{luckbk} W. L\"uck, \emph{$L^2$-invariants and $K$-theory},
Springer-Verlag, Berlin and New York, 2002.

\bibitem{Mac1}C. Macmeikan, \emph{Modules of derivations for toral arrangements}, Indag. Math.
\textbf{15} (2004), 257--267. 

\bibitem{Mac2} C. Macmeikan, \textit{The Poincar\'e polynomial of an mp arrangement}, Proc. Amer. Math. Soc. \textbf{132} (2004), 1575--1580.

\bibitem{mocicombinatorics}L. Moci, \emph{Combinatorics and topology of toric arrangements defined by root systems}, Rend. Lincei Mat. Appl. \textbf{19} (2008), 293--308.

\bibitem{Moci1} L. Moci, \textit{A Tutte polynomial for
    toric arrangements}, Transaction of AMS \textbf{364} (2012), 1067--1088.

\bibitem{mociwonderful2009} L.~Moci, \emph{Wonderful models for toric arrangements},
Int. Math. Res. Not. (2011) doi:10.1093/imrn/rnr016    

\bibitem{SimoLuca}L.~Moci and S.~Settepanella.
\emph{The homotopy type of toric arrangements}, Journal of Pure and Applied Algebra \textbf{215}, (2011), 1980--1989.

\bibitem{ot} P. Orlik and H. Terao, \emph{Arrangements of
Hyperplanes}, Springer-Verlag, Berlin and New York, 1992.

\bibitem{PapaSu}S. Papadima and A. I. Suciu, \emph{The spectral sequence of an equivariant chain complex and homology with local coefficients}, Trans. Amer. Math. Soc. \textbf{362} (2010), 2685--2721. 

\bibitem{STV}V. Schechtman, H. Terao and A. Varchenko, \emph{Cohomology of local systems and the Kac-Kazhdan condition for singular vectors}, J. Pure Appl. Algebra \textbf{100} (1995), 93--102.


\end{thebibliography}
\end{document}